\documentclass[12pt]{amsart}

\usepackage[OT2, T1]{fontenc}

\usepackage{amscd}
\usepackage{amsmath}
\usepackage{amssymb}
\usepackage{mathrsfs} 			
\usepackage{units}
\usepackage[all]{xy}

\usepackage{algorithm}
\usepackage{algorithmic}

%
%
%

%

\def\frk{\frak}               

\def\pp{{\frk p}}
\def\Pp{{\frk P}}
\def\qq{{\frk q}}

\def\mm{{\frk m}}

\def\Phi{{\frk n}}
\def\Phi{{\frk N}}
%
%

%
\def\opn#1#2{\def#1{\operatorname{#2}}} 

\opn\projdim{proj\,dim} \opn\injdim{inj\,dim} \opn\rank{rank}
\opn\depth{depth} \opn\sdepth{sdepth} \opn\fdepth{fdepth}
\opn\grade{grade} \opn\height{height} \opn\embdim{emb\,dim}
\opn\codim{codim}  \opn\min{min} \opn\max{max}

\opn\Tr{Tr} \opn\bigrank{big\,rank}
\opn\superheight{superheight}\opn\lcm{lcm}
\opn\trdeg{tr\,deg}
\opn\reg{reg} \opn\lreg{lreg} \opn\ini{in} \opn\lpd{lpd}
\opn\size{size}
%
\opn\div{div} \opn\Div{Div} \opn\cl{cl} \opn\Cl{Cl}
%
%
\opn\Spec{Spec} \opn\Supp{Supp} \opn\supp{supp} \opn\Sing{Sing}
\opn\Ass{Ass} \opn\Min{Min}
%
%
\opn\Ann{Ann} \opn\Rad{Rad} \opn\Soc{Soc}
%
%
\opn\Im{Im} \opn\Ker{Ker} \opn\Coker{Coker} \opn\Am{Am}
\opn\Hom{Hom} \opn\Tor{Tor} \opn\Ext{Ext} \opn\End{End}
\opn\Aut{Aut} \opn\id{id}  \opn\deg{deg}

\opn\nat{nat}
\opn\pff{pf}
\opn\Pf{Pf} \opn\GL{GL} \opn\SL{SL} \opn\mod{mod} \opn\ord{ord}
\opn\Gin{Gin} \opn\Hilb{Hilb}
%
%
\opn\aff{aff} \opn\con{conv} \opn\relint{relint} \opn\st{st}
\opn\lk{lk} \opn\cn{cn} \opn\core{core} \opn\vol{vol}
\opn\link{link} \opn\star{star}
\opn\gr{gr}

%
%

\def\pot#1#2{#1[\kern-0.28ex[#2]\kern-0.28ex]}

%
%
\opn\dirlim{\underrightarrow{\lim}}
\opn\inivlim{\underleftarrow{\lim}}
%
%
%

\let\tensor=\otimes

%
%
\let\to=\rightarrow

\def\Implies{\ifmmode\Longrightarrow \else
        \unskip${}\Longrightarrow{}$\ignorespaces\fi}
\def\implies{\ifmmode\Rightarrow \else
        \unskip${}\Rightarrow{}$\ignorespaces\fi}
\def\iff{\ifmmode\Longleftrightarrow \else
        \unskip${}\Longleftrightarrow{}$\ignorespaces\fi}

\let\:=\colon
\newtheorem{Theorem}{Theorem}[]
\newtheorem{Lemma}[Theorem]{Lemma}
\newtheorem{Corollary}[Theorem]{Corollary}
\newtheorem{Proposition}[Theorem]{Proposition}

\theoremstyle{definition}

\newtheorem{Remark}[Theorem]{Remark}

\newtheoremstyle{subsection-tweak}
   {11pt}
   {3pt}%
   {}
   {}%
   {\bfseries}
   {}%
   {.5em}
   {\thmnumber{\@{#1}{}\@{#2}.}%
    \thmnote{~{\bfseries#3.}}}    

\newcounter{numberingbase}

\theoremstyle{subsection-tweak}
\newtheorem{bpp}[Theorem]{}
\newtheorem{bppt}[numberingbase]{}
\newcommand{\bbpp}{\begin{bpp}}
\newcommand{\eepp}{\end{bpp}}
\newcommand{\bbppt}{\begin{bppt}}
\newcommand{\eeppt}{\end{bppt}}

\theoremstyle{theorem}

\theoremstyle{definition}


\newcommand{\val}{\mathrm{val}}		



%
\let\epsilon\varepsilon
\let\phi=\varphi
%
%
\textwidth=15cm \textheight=22cm \topmargin=0.5cm
\oddsidemargin=0.5cm \evensidemargin=0.5cm \pagestyle{plain}
%
%
\def\qed{\ifhmode\textqed\fi
      \ifmmode\ifinner\quad\qedsymbol\else\dispqed\fi\fi}
\def\textqed{\unskip\nobreak\penalty50
       \hskip2em\hbox{}\nobreak\hfil\qedsymbol
       \parfillskip=0pt \finalhyphendemerits=0}
\def\dispqed{\rlap{\qquad\qedsymbol}}

%
\opn\dis{dis}
\def\pnt{{\raise0.5mm\hbox{\large\bf.}}}

\opn\Lex{Lex}



\begin{document}

\title{Extensions of valuation rings containing $\bf Q$ as limits of smooth algebras}

\author{ Dorin Popescu}

\address{Simion Stoilow Institute of Mathematics of the Romanian Academy, Research unit 5, University of Bucharest, P.O. Box 1-764, Bucharest 014700, Romania, Email: {\sf dorin.popescu@imar.ro}}

\begin{abstract} We give a necessary and sufficient condition for  an extension of valuation rings containing $\bf Q$ to be a filtered direct limit of smooth algebras.

{\it Key words } : Valuation rings, immediate extensions, pseudo convergent sequences, pseudo limits, Andre\'e-Quillen homology,  smooth morphisms.   \\
 {\it 2020 Mathematics Subject Classification: Primary 13F30, Secondary 13D03, 13A18, 13B40.}
\end{abstract}

\maketitle

\section*{Introduction}

Any integral algebraic variety equipped with a dominant morphism from a valuation ring $V$ can be desingularized along $V$ in characteristic zero as it is shown by the Zariski Uniformization Theorem \cite{Z}. This gives that any valuation ring $V$ containing a field $K$ of characteristic zero is a filtered union of regular $K$-subalgebras of $V$ of finite type, or equivalently of smooth $K$-subalgebras of $V$. In particular, any finite system of polynomial equations over $K$ with a solution in $V$ can be "embedded" in a  finite system of polynomial equations over $K$ with a solution in $V$, but for which one can apply the Implicit Function Theorem.

A ring map $A \to A'$ is \emph{ind-smooth} if $A'$ is a filtered direct limit of smooth $A$-algebras. A filtered direct limit (in other words a filtered colimit) is a limit indexed by a small category that is filtered (see \cite[002V]{SP} or \cite[04AX]{SP}). A filtered  union is a filtered direct limit in which all objects are subobjects of the final colimit, so that in particular all the transition arrows are monomorphisms. The Zariski Uniformization Theorem says, in particular,  that $K\to V$ is ind-smooth, if $K\supset {\bf Q}$. This weaker form has a  different proof in \cite{P}. It was mainly a consequence of 
 the following theorem
 (see \cite[Theorem 21]{P}). 
\begin{Theorem}\label{T0} Let $V\subset V'$ be an immediate extension of valuation rings   containing $\bf Q$.
Then $V'$ is ind-smooth over $V$.
\end{Theorem}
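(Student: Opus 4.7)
The plan is to reduce to showing that every finitely generated $V$-subalgebra $B\subset V'$ admits a factorization $B\to C\to V'$ with $C$ a smooth finitely presented $V$-algebra: this property is equivalent to $V\to V'$ being ind-smooth, so establishing it will suffice. Write such a $B$ as $B=V[X_1,\dots,X_n]/(f_1,\dots,f_r)$ with $X_i\mapsto y_i\in V'$. The essential input from the hypotheses is twofold. Immediateness provides, for any nonzero $d\in V$ and any $y\in V'$, an element $a\in V$ with $d\mid(y-a)$ in $V'$, so each $y_i$ can be approximated to arbitrary valuation-theoretic precision by elements of $V$; in Kaplansky's terminology each $y_i$ is a pseudo-limit of a pseudo-convergent sequence from $V$. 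The hypothesis $\QQ\subset V$ gives separability, so that Jacobian ideals faithfully detect smoothness.

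The heart of the argument will be a N\'eron-style smoothening carried out at the level of finitely presented $V$-algebras. One first picks an $r\times r$ minor $\Delta$ of the Jacobian $(\partial f_i/\partial X_j)$ whose image $\Delta(y_1,\dots,y_n)$ in $V'$ is nonzero; such a $\Delta$ exists because the generic fibre of $V\to V'$ is a field extension in characteristic zero, and is detected by the standard Jacobian criterion applied to a suitable minimal presentation of $B\tensor_V\Frac V$. Next, using the immediate approximation, one replaces the given $V'$-point $(y_1,\dots,y_n)$ by an approximating one and enlarges $B$ by adjoining auxiliary variables whose role is to "absorb" the failure of smoothness measured by $\Delta$ — the elementary smoothening operations produce a new finitely presented $V$-algebra $B'$ still mapping to $V'$ but with a strictly larger Jacobian ideal. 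Iterating this procedure, and arguing that an appropriate singularity invariant (for instance a Fitting-type ideal attached to the Jacobian) strictly improves at each step, one obtains after finitely many iterations a smooth finitely presented $V$-algebra $C$ through which $B\to V'$ factors.

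The principal obstacle is the non-Noetherianness of $V$: Popescu's General N\'eron Desingularization theorem does not apply off the shelf, so each approximation step must be executed strictly inside the category of finitely presented $V$-algebras, and termination of the smoothening has to be justified by hand rather than inherited from standard Noetherian machinery. The immediateness hypothesis is what makes this feasible: it guarantees at every step enough room in $V'$ to move the current $V'$-point to a nearby smoother one without leaving $V'$, so that the inductive approximation scheme does not stall. The characteristic-zero hypothesis enters both through separability (needed for Jacobians to detect smoothness) and by allowing us to invoke Kaplansky's classification of immediate extensions via pseudo-convergent sequences, which organizes the approximation data used throughout the iteration.
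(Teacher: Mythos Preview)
This paper does not itself prove Theorem~\ref{T0}; the result is quoted from \cite[Theorem~21]{P}. The closest in-paper analogue is the proof of Proposition~\ref{p}, which the author says is ``somehow as in \cite[Theorem~21]{P}'', so that is what your sketch should be compared with.

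Your outline contains a genuine error. You assert that immediateness gives, for every nonzero $d\in V$ and every $y\in V'$, an $a\in V$ with $d\mid(y-a)$; in other words, that $V$ is dense in $V'$ for the valuation topology. This is false in general. Immediateness only guarantees that each $y\in V'\setminus V$ is a pseudo-limit of some pseudo-Cauchy sequence $(a_\alpha)$ from $V$, so the values $v(y-a_\alpha)$ increase strictly; there is no reason for them to be cofinal in $\Gamma_{\ge 0}$. If $(a_\alpha)$ is a pseudo-Cauchy sequence in $V$ with no pseudo-limit in $\Frac V$ and with all gauges $v(a_\beta-a_\alpha)$ bounded above by some $\gamma_0\in\Gamma$, then adjoining a pseudo-limit $y$ produces an immediate extension in which $v(y-a)\le\gamma_0$ for every $a\in V$; hence $y$ cannot be approximated modulo any $d$ with $v(d)>\gamma_0$. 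The paper implicitly confirms this: in the proof of Proposition~\ref{p} the dense case is dispatched by citing \cite[Proposition~9]{P}, and the text then says ``Otherwise, the factorization is constructed as follows'', passing to a completely different argument. Since your entire smoothening iteration rests on moving the $V'$-point $(y_1,\dots,y_n)$ arbitrarily close to a $V$-point, this gap is fatal as stated.

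Your termination argument is also only a placeholder: ``an appropriate singularity invariant \ldots\ strictly improves'' names neither the invariant nor the mechanism, and in the non-Noetherian setting this is exactly the hard part. The method actually used (visible in the proof of Proposition~\ref{p}) is structurally different: one fixes a standard element $z'$ with image $z\in V'$, considers the adjacent primes $q_1'\subset q'$ of $\Spec V'$ around $z$, stratifies $\Spec V'$ into finitely many pieces $\mathcal P_1,\dots,\mathcal P_s$ by a discrete residue-degree invariant, and then uses Lemma~\ref{k} together with \cite[Lemma~13]{P} and \cite[Corollary~19]{P} to produce a new finitely presented algebra whose singular locus avoids the next stratum. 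Termination is automatic because there are only $s$ strata; no density or elementwise approximation of the $y_i$ is invoked.
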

An immediate extension of valuation rings is an extension inducing trivial extensions on residue field and group value extensions. 

The goal of this paper is to show the following theorems.

 \begin{Theorem} \label{T1} Let $V\subset V'$ be an extension of valuation rings containing $\bf Q$, $K\subset K'$ its fraction field extension. Then $V'$ is ind-smooth over $V$  if and only if the following statements hold
\begin{enumerate}
\item{} for each $q\in \Spec V$ the ideal $qV'$ is also prime,

\item{}  For any prime ideals $q_1,q_2\in \Spec V$  such that $q_1\subset q_2$ and  $\height(q_2/q_1)=1$ the  extension $V_{q_2}/q_1V_{q_2}\subset V'_{q_2V'}/q_1V'_{q_2V'}$ of  valuation rings is ind-smooth.
\end{enumerate}
\end{Theorem}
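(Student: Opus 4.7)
For $(\Rightarrow)$, ind-smoothness is preserved under base change, so (2) follows at once by localizing at $V\setminus q_2$ and reducing mod $q_1$, which yields the ind-smoothness of the rank-$1$ extension $V_{q_2}/q_1V_{q_2}\to V'_{q_2V'}/q_1V'_{q_2V'}$. For (1), ind-smoothness in characteristic zero gives geometrically regular, hence reduced, fibers. Reducedness of $V'_q/qV'_q=V'\otimes_V k(q)$ together with the valuation-ring structure on $V'_q$ (in which radical ideals are automatically prime) forces $qV'_q$ to be prime, and this tracks back to $qV'$ being prime in $V'$.

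For $(\Leftarrow)$, the idea is to reduce the construction of smooth $V$-subalgebras of $V'$ to the rank-$1$ pieces covered by (2). I would argue by induction on $\rank V$, after a standard filtered-colimit reduction to the finite-rank case. In the inductive step, fix the prime $q\subset\mm$ of $V$ of co-height $1$; by (1), $qV'$ is prime, and the pair $V_q\subset V'_{qV'}$ inherits (1) and (2) under localization, so by induction $V_q\to V'_{qV'}$ is ind-smooth. Applying (2) with $q_1=q$, $q_2=\mm$ gives that the top rank-$1$ piece $V/q\to V'/qV'$ is ind-smooth. Using the composite-valuation descriptions $V=V_q\times_{k(q)}V/q$ and $V'=V'_{qV'}\times_{k(qV')}V'/qV'$, I would lift and glue smooth approximations from the two pieces into smooth $V$-subalgebras of $V'$, invoking Theorem \ref{T0} to desingularize any residual immediate subextensions that arise during the construction.

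\textbf{Main obstacle.} The crux is this gluing: given smooth approximations of $V'_{qV'}$ over $V_q$ and of $V'/qV'$ over $V/q$, produce a single smooth $V$-subalgebra of $V'$ realizing both simultaneously. This is a N\'eron-type desingularization for composite valuations and is delicate because the rings involved are non-Noetherian, so one cannot simply invoke Popescu's theorem on a base. The finite-type assumption on $K\subset K'$ is essential, as it bounds, via Abhyankar-type estimates, the primes of $V'$ that do not come from $V$ and ensures the induction terminates in a controlled way.
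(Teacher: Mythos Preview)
Your $(\Rightarrow)$ is correct and essentially what the paper does: base change gives (2) at once, and for (1) the quotient $V'/qV'$ is ind-smooth over the domain $V/q$, hence reduced, so $qV'$ is radical and therefore prime (ideals in a valuation ring being totally ordered).

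For $(\Leftarrow)$ your plan takes a different route from the paper, and the obstacle you yourself flag is a genuine gap that you do not close. The paper does \emph{not} induct on the rank of $V$, does not reduce to finite rank, and never uses the composite-valuation fiber product $V=V_q\times_{k(q)}V/q$. The gluing you need---from a smooth $V_q$-approximation of $V'_{qV'}$ and a smooth $V/q$-approximation of $V'/qV'$ to a single smooth $V$-algebra approximating $V'$---is not a formal consequence of anything at hand, and you provide no mechanism for it; it is essentially as hard as the theorem itself. Theorem~\ref{T0} does not help here, since the residual extensions arising in such a gluing have no reason to be immediate.

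What the paper does instead is a direct N\'eron-desingularization loop, avoiding any structural decomposition of $V$. Given a finitely presented $E$ and $w\colon E\to V'$, one chooses $z\in V'$ with $zV'\subset w(H_{E/V})V'$ and looks at the adjacent primes $q_1'\subsetneq q'$ around $z$. The key finiteness comes from partitioning $\Spec V'$ into finitely many pieces $\mathcal F_1,\dots,\mathcal F_s$ according to the transcendence degree $t_{q'}=\trdeg\,k(q')/k(q'\cap V)$; this is where the finite-type hypothesis on $K'/K$ enters. Using Lemma~\ref{k} together with Proposition~\ref{p}, Zariski uniformization (when $q'\cap V=q_1'\cap V$), or hypothesis~(2) applied to the rank-one slice $V_q/q_1V_q\subset V'_{q'}/q_1'V'_{q'}$ (when $q=q'\cap V\neq q_1=q_1'\cap V$, in which case one checks $\height(q/q_1)=1$ via~(1)), one factors $w$ through an $E'$ with $w'(H_{E'/V})\not\subset q'$, pushing the new $q_1'$ into a later $\mathcal F_j$. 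After at most $s$ iterations one is in the situation of Proposition~\ref{p} and concludes.
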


 In the next theorem the  necessary and sufficient conditions are  given  in the frame of the value group extension of $V\subset V'$.  

\begin{Theorem} \label{T2} Let $V\subset V'$ be an extension of valuation rings containing $\bf Q$, $\Gamma\subset \Gamma'$ the value group extension of $V\subset V'$  and $\val:\Gamma'\to K'^{*}$ the valuation of $V'$. Then $V'$ is ind-smooth over $V$ if and only if  the following statements hold
\begin{enumerate}

\item for each $q\in \Spec V$ the ideal $qV'$ is prime,

\item  for any prime ideals $q_1,q_2\in \Spec V$ such that $q_1\subset q_2$ and height$(q_2/q_1)=1$  and any $x'\in q_2V'\setminus q_1'$ there exists $x\in V$ such that $\val(x')=\val(x)$,  where $q_1'\in \Spec V'$ is the prime ideal corresponding to the maximal ideal of $V_{q_1}\otimes_V V'$, that is the maximal prime ideal of $V'$ lying on $q_1$.
\end{enumerate}
\end{Theorem}
 These theorems follow from Theorems \ref{t}, \ref{t1} and Corollary \ref{c1}.
 
 Working with solutions in $V'$ of a finite system of polynomial equations over $V$, the above result says when it is possible to reduce to the case of solutions in $V'$ of a finite system of polynomial equations over $V$  for which one can apply the Implicit Function Theorem (see Proposition \ref{P}, this method is  used in the Artin approximation theory). 

\vskip 0.5 cm

\section{Extensions of valuation rings essentially of finite type containing  $\bf Q$}

For a finitely presented ring map $A \to B$, an element $b \in B$ is \emph{standard over $A$} if there exists a presentation $B \cong A[X_1, \dotsc, X_m]/I$ and $f_1, \dots, f_r \in I$ with $r \leq m$ such that $b = b'b''$ with $b' = \det ({(\partial f_i/\partial X_{j}))_{1 \le i,\, j \le r}} \in A[X_1, \dotsc, X_m]$ and a $b'' \in A[X_1, \dotsc, X_m]$ that kills~$I/(f_1, \dotsc, f_r)$ (our standard element is a special power of the standard element from  \cite[Definition, page 9]{S} given in the particular case of the valuation rings, see also \cite[Theorem 4.1]{S}). By abuse we denote also by $b',b''$ the  induced elements of $B$. The radical of the ideal generated by the  elements of $B$ standard over $A$ is $H_{B/A}$, which defines the non-smooth (that is the singular) locus of $B/A$.

We will need the following lemma proved in  \cite[Lemma 7]{P}.

\begin{Lemma} \label{k}
For a commutative diagram of ring morphisms

\xymatrix@R=0pt{
& B \ar[rd] & & && B \ar[dd]^{b \, \mapsto\, a} \ar[rd] & \\
A \ar[rd] \ar[ru] & & V & \mbox{that factors as follows} & A \ar[ru]\ar[rd] & & V/a^3V \\ 
& A' \ar[ru] & & && A'/a^3A' \ar[ru] &
}

\noindent with $B$ finitely presented over $A$, $V$ a valuation ring, an element $b \in B$ that is standard over $A$, and a nonzerodivisor $a \in A'$ that maps to a nonzerodivisor in $V$ that lies in every maximal ideal of $V$, 
there is a smooth $A'$-algebra $S$ such that the original diagram factors as follows:

\hskip 4 cm\xymatrix@R=0pt{
& B \ar[rdd] \ar[rrd] & & \\
A \ar[rd] \ar[ru] & &  & V. \\
& A' \ar[r]  & S \ar[ru] &
}

 \end{Lemma}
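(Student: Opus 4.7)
The plan is to construct $S$ explicitly by a N\'eron--Popescu desingularization, starting from the approximate factorization modulo $a^3$. First I would unpack the standardness of $b$: fix a presentation $B \cong A[X_1, \dots, X_m]/I$ and polynomials $f_1, \dots, f_r \in I$ (with $r \le m$) witnessing $b = b' b''$, where $b' = \det\bigl((\partial f_i/\partial X_j)_{1 \le i,j \le r}\bigr)$ and $b''I \subset (f_1, \dots, f_r)$. The factorization hypothesis provides a ring homomorphism $\psi\colon B \to A'/a^3A'$ with $\psi(b) \equiv a \pmod{a^3}$; lifting $\psi(X_i)$ to elements $x_i \in A'$ gives $f_j(x) \in a^3 A'$ for $1 \le j \le r$ and $b(x) \equiv a \pmod{a^3}$. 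The composite $B \to V$ sends $X_i$ to elements $y_i \in V$ with $y_i \equiv x_i \pmod{a^3 V}$.

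Next I would introduce correction variables $Y_1, \dots, Y_m$ (encoding the substitution $X_i = x_i + aY_i$) and an auxiliary variable $T$. The Taylor expansion
$$f_j(x + aY) = f_j(x) + a\sum_{i=1}^m (\partial f_j/\partial X_i)(x)\,Y_i + a^2 Q_j(Y),$$
together with $f_j(x) \in a^3 A'$, yields well-defined polynomials $\tilde g_j \in A'[Y]$ with $a^2 \tilde g_j = f_j(x + aY)$. Set
$$S := A'[Y_1, \dots, Y_m, T]\big/\bigl(\tilde g_1, \dots, \tilde g_r,\ 1 - T\, b''(x + aY)\bigr).$$
The map $B \to S$ is $X_i \mapsto x_i + aY_i$: the $f_j$ for $j \le r$ vanish by construction of $\tilde g_j$, and for any remaining $f \in I$ one uses $b'' f \in (f_1, \dots, f_r)$ together with the invertibility of $b''(x+aY)$ in $S$. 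The map $S \to V$ sends $Y_i \mapsto (y_i - x_i)/a \in a^2 V \subset V$ and $T \mapsto b''(y)^{-1}$, provided $b''(y)$ is a unit in $V$. Smoothness of $S$ over $A'$ follows from the Jacobian criterion applied to the $r+1$ defining relations in the variables $(Y_1, \dots, Y_r, T)$: the Jacobian is block triangular, with determinant a unit multiple of $b'(x+aY)\, b''(x+aY)$, and the relation $b(x) \equiv a \pmod{a^3}$ together with the division by $a^2$ baked into each $\tilde g_j$ forces this determinant to be a unit in $S$ --- this is precisely where the exponent $3$ in the hypothesis is used.

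The main obstacle is simultaneously reconciling two competing requirements: smoothness of $S$ over $A'$ wants $b'(x+aY)\, b''(x+aY)$ to become a unit in $S$, while existence of the map $S \to V$ wants $b''(y)$ to be a unit in $V$. The latter is not automatic, since $b(y) \equiv a$ and $a \in \mathfrak m_V$, so the valuation of $a$ has to be entirely absorbed by one of the two factors. Resolving this requires choosing the presentation of $B$ carefully so that $b'(y)$ absorbs all the $a$-valuation of $b(y)$, leaving $b''(y)$ a unit in $V$ --- for instance by a change of variables among the $X_i$ and a careful selection of which $r$ polynomials serve as the complete-intersection part of the presentation compatibly with the given map $B \to V$. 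Once this balancing is achieved, the block-triangular Jacobian argument goes through and both required maps exist.
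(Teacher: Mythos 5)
Note first that the paper does not prove this lemma at all: it is quoted verbatim from \cite[Lemma 7]{P}, so you are attempting to reprove a cited result. Your overall strategy (lift the factorization mod $a^3$ to $x\in A'^m$, perturb, divide by a power of $a$, conclude by the Jacobian criterion) is indeed the N\'eron/Elkik smoothing pattern behind that lemma, but as written the construction collapses at its first step. The polynomials $\tilde g_j$ do not exist: by your own expansion $f_j(x+aY)=f_j(x)+a\sum_i(\partial f_j/\partial X_i)(x)Y_i+a^2Q_j(Y)$ with $f_j(x)\in a^3A'$, the right-hand side is divisible by $a$ but not by $a^2$, because the linear term carries only one factor of $a$; having $a^2$ divide $f_j(x+aY)$ in $A'[Y]$ would force $(\partial f_j/\partial X_i)(x)\in aA'$ for all $i$, which is not available. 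Consequently $S$ is undefined, and the subsequent Jacobian computation is also wrong: with $\tilde g_j=a^{-2}f_j(x+aY)$ the block-triangular determinant is $\pm a^{-r}\,b'(x+aY)\,b''(x+aY)$, not a unit multiple of $b'(x+aY)b''(x+aY)$, and neither expression lies in $A'[Y,T]$. The move your substitution $X_i=x_i+aY_i$ misses is to perturb along the columns of $b''(x)\operatorname{adj}(M)(x)$, where $M=((\partial f_i/\partial X_j))_{1\le i,j\le r}$: for instance $X=x+a\,b''(x)G(x)Z+a^2W$ with $G=\binom{\operatorname{adj}M}{0}$, so that the linear term becomes $a\,b(x)Z_i\equiv a^2Z_i\pmod{a^4A'}$, the division by $a^2$ is then legitimate, and the Jacobian of the new equations with respect to $Z$ is the identity up to terms whose image under the intended $V$-point ($Z\mapsto 0$, $W\mapsto(y-x)/a^2$) lies in $aV$, hence is invertible there because $a$ lies in every maximal ideal of $V$. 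That, and not your determinant claim, is where the exponent $3$ and the Jacobson-radical hypothesis actually enter.

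Moreover, the ``main obstacle'' you flag at the end is not just unresolved; your proposed resolution is self-contradictory. Any ring map $S\to V$ whose composite with $B\to S$ is the given $B\to V$ sends $b'(x+aY)b''(x+aY)$ to $b'(y)b''(y)=b(y)\equiv a\pmod{a^3V}$, which is $a$ times a unit of $V$ and hence a nonunit of $V$; since ring homomorphisms send units to units, no element of $S$ whose image in $V$ is a nonunit can be inverted in $S$. Hence arranging ``$b'(y)$ absorbs all the $a$-valuation so that $b''(y)$ is a unit'' cannot save the construction: your smoothness argument simultaneously requires $b'(x+aY)$ to become invertible in $S$, while its image $b'(y)$ would then be a nonunit of $V$. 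A correct proof must not presuppose that either factor of the standard element has unit image in $V$ (the lemma gives no such hypothesis), must obtain smoothness from the adjugate-corrected equations as above rather than from invertibility of $b'$, must dispose of the generators of $I$ beyond $f_1,\dots,f_r$ without simply inverting $b''(x+aY)$, and may only localize at elements whose image in $V$ is a unit. None of this is supplied in your proposal, so there is a genuine gap; for the actual argument see \cite[Lemma 7]{P} and its antecedents (e.g.\ \cite{ZKPP}, \cite{S}), which the present paper cites instead of reproving.
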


Applying the above lemma we need to use as in \cite{P}   the following two lemmas.

\begin{Lemma}(\cite[Lemma 13]{P}) \label{L1}
For ring maps $A \to B \to V$ with $B$ of finite type over $A$, a prime $\Pp \subset V$ with preimage $\pp \subset A$, and a factorization $A \to B \to S' \to V_{\Pp}$ for a finitely presented $A_{\pp}$-algebra $S'$, there are a finitely presented $A$-algebra $S$, an $s \in S$ with $S_{s} \tensor_A A_{\pp} \simeq S'[X, X^{-1}]$, and a  factorization
\[
 A \to B \to S \to V \ \ \mbox{such that} \ \ S \to V_{\Pp} \ \ \mbox{factors as} \ \ S \to S_{s} \tensor_A A_{\pp} \to  V_{\Pp}.
\] 
\end{Lemma}

An extension of valuation rings $V\subset V'$ is {\em dense} if $V'\subset {\hat V}$, where $\hat V$ is the completion of $V$.
\begin{Lemma} (\cite[Proposition 9]{P} \label{L2}
For a  ring $A$, a dense extension of valuation rings  $V\subset V'$, $K$ the  fraction field of $V$,  a ring morphism $A \to V$, a finitely presented $A$-algebra $B$, and maps
\[
A \to B \to V \ \ \mbox{such that} \ \ B \to K \ \ \mbox{factors through some $A$-smooth localization of $B$}
\] 
suppose that
 there exist a smooth $A$-algebra $S'$ and a factorization $
A \to B \to S' \to V'$. Then
 there exist a smooth $A$-algebra $S$ and a factorization $
A \to B \to S \to V$. 
In particular, there exist a smooth $A$-algebra $S$ and a factorization $A \to B \to S \to V$   if  there exist a smooth $A$-algebra $\hat S$ and a factorization $A \to B \to {\hat S} \to {\hat V}$,  ${\hat V}$ being the completion of $V$. 
\end{Lemma}

After these preparations  we may give a
small extension of Theorem \ref{T0} when the residue field extension is not trivial.

\begin{Proposition}\label{p}  Let $V\subset V'$ be an extension of valuation rings  containing $\bf Q$,  $k\subset k'$ its  residue field extension,  $K\subset K'$ its fraction field extension and $\Gamma\subset \Gamma'$ its value group extension. Assume $K'/K$ is a field extension of finite type and $\trdeg K'/K=\trdeg k'/k<\infty$ and one of the following statements hold:
\begin{enumerate}

\item{} the value group extension $\Gamma\subset \Gamma'$ is trivial,

\item for each $q\in \Spec V$ the ideal $qV'$ is also prime and
for any prime ideals $q_1,q_2\in \Spec V$  such that $q_1\subset q_2$ and  $\height(q_2/q_1)=1$ the  extension $V_{q_2}/q_1V_{q_2}\subset V'_{q_2V'}/q_1V'_{q_2V'}$ of  valuation rings is ind-smooth.
\end{enumerate}
 Then $V'/V$ is ind-smooth. 
\end{Proposition}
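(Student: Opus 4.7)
My plan is to reduce both cases to Theorem \ref{T0}, which handles immediate extensions, by constructing an intermediate ring $V \subset W \subset V'$ that is ind-smooth over $V$ and such that $W \subset V'$ is immediate; transitivity of ind-smoothness then concludes. A common first step applies in both cases: lift elements $y_1, \ldots, y_n \in V'$ whose reductions form a transcendence basis of $k'/k$, where $n = \trdeg k'/k = \trdeg K'/K$. A clear-denominators argument in the valuation $V$ shows the $y_i$ are algebraically independent over $K$, hence a transcendence basis of $K'/K$. Since the $\bar y_i$ are algebraically independent over $k$, one has $\mm_{V'} \cap V[y] = \mm_V V[y]$, so the Gauss localization $V^{(1)} := V[y]_{\mm_V V[y]}$ sits inside $V'$ and is a valuation ring with value group $\Gamma$ and residue field $k(\bar y)$; as a localization of a polynomial $V$-algebra it is ind-smooth over $V$. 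The residue field extension $k(\bar y) \subset k'$ is finite separable (characteristic zero), so lifting a primitive element $\bar\theta$ through its monic minimal polynomial $P \in V^{(1)}[T]$ and using $P'(\theta) \in V'^{\times}$ yields an \'etale step $V^{(1)} \subset V^{(2)}$ inside $V'$; this $V^{(2)}$ is ind-smooth over $V$, with residue field $k'$ and value group $\Gamma$.

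In case (1), where $\Gamma = \Gamma'$, the extension $V^{(2)} \subset V'$ is immediate and Theorem \ref{T0} concludes. In case (2), $V^{(2)} \subset V'$ has trivial residue field extension but value group extension $\Gamma \subset \Gamma'$, which is torsion since $\trdeg K'/K = \trdeg k'/k$ forces the rational rank of $\Gamma'/\Gamma$ to vanish by Abhyankar's inequality. Both hypotheses of case (2) transfer from $V$ to $V^{(2)}$. I would then induct on the rank $r$ of $V$ (finite by Abhyankar applied to the finite-type extension $K'/K$): the base case $r = 0$ forces $\Gamma' = 0$ (an ordered torsion group is trivial), so $V' = K'$ and ind-smoothness follows because any finite-type field extension in characteristic zero is ind-smooth. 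For the inductive step, localize at the prime $q$ of height $r-1$: the extension $V^{(2)}_{qV^{(2)}} \subset V'_{qV'}$ has rank $r-1$ and is ind-smooth by induction, while the rank-one quotient extension at $q$ is ind-smooth by hypothesis. Patching these two pieces via Lemma \ref{k}, with a nonzerodivisor $a \in q$ supplying the cube, should assemble smooth $V^{(2)}$-subalgebras of $V'$ whose filtered union is $V'$.

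The main obstacle is this patching in case (2). Lemma \ref{k} is tailored for promoting a finitely presented diagram that commutes modulo $a^3$, together with a standard element, to an honest smooth subalgebra sitting inside $V'$; it should provide the mechanism for lifting smooth approximations from the rank-one quotient back to smooth subalgebras of $V^{(2)}$ within $V'$. The delicate part is coordinating those lifts with the smooth approximations coming from the lower-rank localization, so that a cofinal system of smooth subalgebras exhausts $V'$: choosing $a$ appropriately at each stage and verifying compatibility across the two pieces should constitute the technical heart of the argument.
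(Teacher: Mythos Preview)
Your reduction via the Gauss extension $V^{(1)}=W$ matches the paper's first move, and in case~(1) your idea of adjoining a residue-primitive element to reach an immediate extension $V^{(2)}\subset V'$ is sound in outline, though the \'etale step needs one more sentence: you must choose the lift $\theta$ inside the integral closure of $W$ in $K'$ (which surjects onto $k'$ since $V'$ is one of its localizations) so that the monic minimal polynomial of $\theta$ over $L$ actually lies in $W[T]$; only then is $W[T]/(P)$ literally $W[\theta]$ and the localization at $\mm_{V'}\cap W[\theta]$ \'etale. With that fix, case~(1) does reduce to Theorem~\ref{T0}. This is genuinely different from the paper, which does not build a global intermediate valuation ring but instead works prime by prime in $\Spec V'$.

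Case~(2), however, breaks. Your induction is on the rank $r$ of $V$, which you claim is finite ``by Abhyankar applied to the finite-type extension $K'/K$''. Abhyankar's inequality bounds only $\mathrm{rat.rk}(\Gamma'/\Gamma)+\trdeg(k'/k)$ by $\trdeg(K'/K)$; it says nothing about $\Gamma$ itself, and $V$ may well have infinite Krull dimension under the stated hypotheses. So the induction never gets started. Even granting finite rank, the patching you sketch---gluing a smooth approximation over the rank-$(r-1)$ localization with one over the rank-one quotient via Lemma~\ref{k}---is not what that lemma does: Lemma~\ref{k} upgrades a single factorization modulo $a^3$ to an honest smooth factorization, but it does not by itself coordinate two independent smooth approximations coming from a localization and a quotient into one smooth $V$-algebra inside $V'$.

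The paper circumvents both problems by never inducting on $\dim V$. Instead it partitions $\Spec V'$ into finitely many pieces $\mathcal P_1,\ldots,\mathcal P_s$ according to the degree $f_{q'}=[K'_{q'}:L_{q'}]$ of the residue-fraction-field extension at $q'$; this degree is bounded by $[K':L]<\infty$, so $s$ is finite regardless of $\dim V$. Given a finitely presented $E\to V'$ with a standard element mapping to $z$, the argument locates the adjacent primes $q'_1\subset q'$ around $z$, shows the relevant map is (essentially) \'etale or ind-smooth on the appropriate fibre, and uses Lemma~\ref{k} together with \cite[Lemma~13]{P} to replace $E$ by some $E'$ whose singular locus avoids a larger prime---forcing the new $q'_1$ into a strictly later $\mathcal P_i$. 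After at most $s$ such steps the singular locus is empty. Hypothesis~(2) enters exactly when $q'_1$ is the top of its block and $q'\cap V\neq q'_1\cap V$: then one checks $\height(q/q_1)=1$ and invokes the assumed ind-smoothness of $V_q/q_1V_q\subset V'_{q'}/q'_1V'_{q'}$ to get the needed smooth factorization on that fibre.
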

\begin{proof} Let $\mm$ be the maximal ideal of $V$ and  $x$ be a system of elements of $V'$ inducing a transcendental basis of $k'/k$. Then $x$ is algebraically independent over $V$ and  $W=V'\cap K(x)\cong V[X]_{mV[X]}$, $X$ being some variables,  is ind-smooth over  $V$. Thus $K'$ is a finite extension of $L=K(x)$  the fraction field of $W$ by our hypothesis.     The proof follows the proof of   \cite[Theorem 21]{P}.

Let $E=V[Y]/I$, $Y=(Y_1,\ldots,Y_n)$ be a finitely generated $V$-subalgebra of $V'$ (so finitely presented by \cite[Theorem 4]{Na}) and a  $w:E\to V'$ the inclusion.  By  \cite[Lemma 1.5]{S} it is enough to show that $w$ factors through a smooth $V$-algebra.   By separability  we have  $ w( H_{E/V})\not =0$, let us assume that  $  H_{E/V}V' \supset zV'$ for some $z \in  V'$, $z\not =0$. 
Similarly as in  \cite[Lemma 4]{ZKPP} we may
assume $z=w(z')$, $z'\in E$ and    for some polynomials $ F = (F_1,\ldots,F_r)$
from $I$ that $z'\in NM E$ for some $N\in ((F):I)$ and a $r\times r$-minor $M$ of the Jacobian matrix $(\partial F_i/\partial Y_j)$. Thus we may suppose $z'\in E$ standard over $V$, which is necessary later in order to apply Lemma \ref{k}.

 As $K'/L$ is finite we may assume $z\in W$.  If  $V\subset V'$ is dense  we may apply Lemma \ref{L2}   to see that $w$ factors through a smooth $V$-algebra. Otherwise, the factorization is constructed as follows.  Let $q_1'\in \Spec V'$ be the prime ideal corresponding to the maximal ideal of the fraction ring of $V'$ with respect to the multiplicative system  generated by $z$. Then $q_1'$ is the biggest prime ideal of $V'$ which does not contain $z$. The minimal prime ideal $q'$ of $zV'$ satisfies height$(q'/q_1')=1$.  The idea is to construct using Lemma \ref{k}   a factorization $E\to E'\xrightarrow{w'} V'$ such that $w'(H_{E'/V})\not \subset q'$, where $E'$ is finitely presented over $V$. In several steps we want to find  
  a factorization $E\to E^{(n)}\xrightarrow{w^{(n)}}V'$ such that $w^{(n)}(H_{E^{(n)}/V})V'=V'$ which will be enough.
  The problem is to show that we can find such $E^{(n)}$ in finite steps. For this we  consider a finite partition ${\mathcal P}_i$, $i=1,\ldots,s$ of $\Spec V'$ corresponding to those $q''\in\Spec V'$ which have the same dimension $f_i=f_{q''}\leq f=[K':L]$ of the fraction field extension $L_{q''}\subset K'_{q''}$ of $W/q''\cap W\subset V'/q''$. We will see that to each construction $q_1'$ (associated to a standard element $z$ of some $V$-algebra $E^{(n)}$) change  from one ${\mathcal P}_j$ to another one from  ${\mathcal P}_i$ with $j<i$ and $f_i<f_j$. Finally we arrive in finite steps to the case $f_{q_1'}={\bar f}=[k':k'']$, $k''$ being the residue field of $W$. Note that $f_{q'}\leq f_{q_1'}$.

    Let $x_{q'}$ be a primitive element of the separable finite extension $K'_{q'}/L_{q'}$ and $g_{q'}\in (W/q'\cap W)[X]$ be a primitive polynomial multiple of Irr$(x_{q'},L_{q'})$- the irreducible polynomial of $x_{q'}$ over $L_{q'}$- by a nonzero constant of $L_{q'}$. Note that if $q',\qq'\in {\mathcal P}_i$, $q'\subset \qq'$ then $f_{q'}=f_{\qq'}=f_i$ and $g_{q'}$ remains irreducible over $W/\qq'\cap W$. We may take $x_{\qq'},g_{\qq'}  $ induced by $x_{q'},g_{q'}$.
    Clearly, $f_s={\bar f}$ because $f_{\mm'}=[k':k'']$ for the maximal ideal $\mm'$ of $V'$.
A set  ${\mathcal P}_i$ has a maximum element for inclusion namely $\pp_i'=\cup_{ \qq' \in {\mathcal P}_i}\qq'$. Indeed, $\pp_i'$  is clearly a prime ideal and if  $f_{\pp_i'}<f_i$ then $f_{\qq'}<f_i$ for some $\qq'\in  {\mathcal P}_i$, which is false.

    Assume $q_1'\in {\mathcal P}_j$. If $q_1'\not =\pp_j'$ then $(W/q_1'\cap W)_{\pp_j'\cap W}\subset (V'/q_1')_{\pp_j'}$ is in fact a localization of $(W/q_1'\cap W)[X]/(g_{q_1'})$ because $g_{q_1'}'=\partial g_{q_1'}/\partial X$ corresponds to a unit in $(V'/q_1')_{\pp_j'}$ and so the composite map $E\otimes_VW\to V'\to (V'/q_1')_{\pp_j'}$ induced by $w$ factors through an etale $W/q_1'\cap W$-algebra of the form $((W/q_1'\cap W)[X]/(g_{q_1'})_{g_{q_1'}'h}$ for some $h\in W[X]$. In particular $E\otimes_VW
\to V'\to (V'/(z^3))_{\pp_jV'}$ factors  through an etale $W/(z^3)$-algebra and by Lemma \ref{k} the map  $E\otimes_VW\to V'\to V'_{\pp_j'}$ factors through a smooth $W$-algebra. But $W$ is a localization of a polynomial $V$-algebra and so $w$ factors through a smooth $V$-algebra. Using Lemma \ref{L1}  we see that $w$ factors through a finitely presented $V$-algebra $E'$, let us say through a map $w':E'\to V'$ with $w'(H_{E'/V})\not \subset \pp_j'$. Changing $E$ by $E'$ we see that the new $q'$ belongs to ${\mathcal P}_i$ for some $i>j$. Moreover, the new $q_1'$ belongs also to ${\mathcal P}_{i'}$ for some $i'>j$, because otherwise we get $q_1'=\pp_j'$. 
    
    If $q_1'=\pp_j'$ then $q'\in {\mathcal P}_{j+1}$ and we apply  \cite[Corollary 19]{P} as in \cite[Proposition 20]{P} 
     when (1) holds.  Then     $(W/q_1'\cap W)_{q'}\subset (V'/q_1')_{q'}$ is ind-smooth and  we see that the composite map $E\otimes_VW\to V'\to (V'/q_1')_{q'}$ factors through a
 smooth $W/q_1'\cap W$-algebra. By Lemma \ref{k}  the composite map $E\to V'\to V'_{q'}$ factors through a smooth $V$-algebra and by Lemma \ref{L1} we get that   $w$ factors through a finitely presented $V$-algebra $E'$, let us say through a map $w':E'\to V'$ with $w'(H_{E'/V})\not \subset q'$. Now the new $q_1'$,
 that is the old $q'$, belongs to ${\mathcal P}_{j+1}$.

 Set $q_1=q'_1\cap V$, $q=q'\cap V$ and assume (2) holds. If $q=q_1$, then we see directly that   the composite map $E\to V'\to (V'/q_1')_{q'}$ factors through a
 smooth $(V/q_1)_{q_1}$-algebra by Zariski's Uniformization Theorem (\cite{Z}, see also \cite[Theorem 35]{P}) and using Lemma \ref{k} we see that the composite map $E\to V'\to V'_{q'}$ factors through a
 smooth $V$-algebra. As above by  Lemma \ref{L1}  we see that $w$ factors through a finitely presented $V$-algebra $E'$, let us say through a map $w':E'\to V'$ with $w'(H_{E'/V})\not \subset q'$. Now the new  $q_1'$  that is the old $q'$, belongs to ${\mathcal P}_{j+1}$.

 Suppose $q\not=q_1$. We have $qV'=q'$ because otherwise $qV'\subset q'_1$ and we get $q=q_1$ which is false. Also $q_1V'\subset q'_1$ because otherwise we get $q_1'=q'$,  which is not possible. It follows that height$(q/q_1)=1$ and $q_1V'=q_1'$ since height$(q'/q_1')=1$.
 By (2)
 the  extension $V_q/q_1V_q\subset V'_{q'}/q_1'V'_{q'}$ of  valuation rings is ind-smooth and so the composite map $E\to V'\to (V'/q_1')_{q'}$ factors through a
 smooth $(V/q_1)_{q'}$-algebra. Using Lemma \ref{k} we see that the composite map $E\to V'\to V'_{q'}$ factors through a smooth $V$-algebra. Applying again  Lemma \ref{L1}  we see that   $w$ factors through a finitely presented $V$-algebra $E'$, let us say through a map $w':E'\to V'$ with $w'(H_{E'/V})\not \subset q'$. Now the new $q_1'$,
 that is the old $q'$, belongs to ${\mathcal P}_{j+1}$.

  In some finitely many steps (at most $s$) we arrive to the case when $f_{q_1}={\bar f}$. As before $V'/q_1'$  is a localization of an etale $W/q_1'\cap W$-algebra and so $V'/z^3V'$ is a localization of an etale $W/z^3W$-algebra. Applying Lemma \ref{k} we see that $E\otimes_VW\to V'$ factors through an etale $W$-algebra, that is $w$ factors through a smooth $V$-algebra.
 Now  apply  \cite[Lemma 1.5]{S}.   
\hfill\ \end{proof}

\begin{Remark}\label{r} The above proposition does not work when $k$ has not characteristic  zero. A reason is  that \cite[Corollary 19]{P} does not work in this case as shows for example \cite[Example 3.13]{Po1} inspired by \cite[Sect 9, No 57]{O} (see also \cite[Remark 6.10]{Po1}). Also Zariski's Uniformization Theorem is open when $k$ has not characteristic  zero.
\end{Remark}

\begin{Remark}\label{r1} Let  $V\subset V'$ be an immediate algebraic extension of valuation rings of dimension one and $\hat V $ the completion of $V$. Assume $V\subset {\hat V}$ is separable and transcendental, that is its fraction field extension is separable and contains at least one transcendental element. The example  \cite[Example 3.13]{Po1} shows that $V\subset V'$ can be not dense. But if $V\subset V'$ is ind-smooth then necessarily $V\subset V'$ is dense (see \cite[Theorem 2]{Po}).
\end{Remark}

\begin{Theorem} \label{t} Let $V\subset V'$ be an extension of valuation rings containing $\bf Q$, $K\subset K'$ its fraction fields and $\Gamma \subset \Gamma'$ its value group extension.  If the extension $\Gamma\subset \Gamma'$ is trivial then $V'$ is ind-smooth over $V$. If $K'/K$ is of finite type, but $\Gamma \subset \Gamma'$  is not necessarily trivial, then $V'$ is ind-smooth over $V$  if and only if the following statements hold
\begin{enumerate}
\item{} for each $q\in \Spec V$ the ideal $qV'$ is also prime,

\item{}  For any prime ideals $q_1,q_2\in \Spec V$  such that $q_1\subset q_2$ and  $\height(q_2/q_1)=1$ the  extension $V_{q_2}/q_1V_{q_2}\subset V'_{q_2V'}/q_1V'_{q_2V'}$ of  valuation rings is ind-smooth.
\end{enumerate}
\end{Theorem}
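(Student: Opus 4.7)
The theorem contains three implications: (A) if $V'/V$ is ind-smooth, then conditions $(1)$ and $(2)$ hold; (B) if the value group extension $\Gamma\subset \Gamma'$ is trivial, then $V'/V$ is ind-smooth (with no finite type hypothesis); and (C) if $K'/K$ is of finite type and $(1),(2)$ hold, then $V'/V$ is ind-smooth.

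\textbf{For (A):} I would use that ind-smoothness is preserved under base change, localization, and quotient. The base change $V'/qV'\cong V'\otimes_V V/q$ is then ind-smooth over the reduced domain $V/q$. Since smooth algebras over a reduced ring are reduced (their geometric fibres are regular and reducedness is preserved by flat base change and by filtered colimits), the ring $V'/qV'$ is reduced. Because $V'$ is a valuation ring its ideals are totally ordered, so the radical of any ideal is prime; reducedness of $V'/qV'$ therefore forces $qV'=\sqrt{qV'}$ to be prime, giving $(1)$. Condition $(2)$ then follows by first localizing at $V\setminus q_2$ (which lies in $V'\setminus q_2V'$ thanks to $(1)$) and then passing to the quotient by $q_1V'$; both operations preserve ind-smoothness.

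\textbf{For (B):} I would reduce to the case where $K'/K$ is of finite type by a standard filtered colimit argument: writing $V'$ as a filtered colimit of its finitely generated $V$-subalgebras $E$, it suffices to factor each inclusion $E\hookrightarrow V'$ through a smooth $V$-algebra. Set $V'':=V'\cap \Frac E$; this is a valuation ring in a finite type extension $K''/K$ whose value group sits inside $\Gamma'=\Gamma$, so $V\subset V''$ inherits the trivial value group extension. If the Abhyankar equality $\trdeg K''/K=\trdeg k''/k$ holds, Proposition~\ref{p}(1) applies directly. Otherwise (the Kaplansky-defect situation), I adjoin to $V$ lifts $x\subset V''$ of a transcendence basis of $k''/k$ to obtain the ind-smooth intermediate valuation ring $W=V[x]_{\mm V[x]}$. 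The residual extension $W\subset V''$ then has trivial value group and algebraic residue field extension, and an ind-etale enlargement $W\hookrightarrow W'\hookrightarrow V''$ (using $\mathbf{Q}\subset V$ to lift algebraic residue generators in Hensel fashion) reduces to an immediate extension $W'\subset V''$, which is ind-smooth by Theorem~\ref{T0}.

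\textbf{For (C):} The target is Proposition~\ref{p}(2), which requires the Abhyankar equality $\trdeg K'/K=\trdeg k'/k$. When the equality holds, the proposition applies verbatim. Otherwise, adjoining lifts of a transcendence basis of $k'/k$ produces an intermediate $W$ ind-smooth over $V$, and the extension $W\subset V'$ then has an algebraic residue field extension. One must transfer $(1)$ and $(2)$ from $V\subset V'$ to $W\subset V'$ (using that $W/V$ is a localization of a polynomial ring, so that primes of $W$ above a given $q\in\Spec V$ are controlled), after which the defect part is absorbed by the argument of (B) and Proposition~\ref{p}(2) handles the Abhyankar portion. The main obstacle is the defect regime $\trdeg K'/K>\trdeg k'/k$, where Proposition~\ref{p} does not apply directly: here one must isolate a Kaplansky-type immediate piece (to be handled by Theorem~\ref{T0}) from an ind-etale residue piece (to be handled by Hensel-style lifting in characteristic zero), and verify carefully that the prime-ideal hypotheses $(1)$ and $(2)$ descend to each intermediate valuation ring used in the construction.
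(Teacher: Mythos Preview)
Your argument for (A) is essentially the paper's (and arguably more complete: the paper asserts that $V'/qV'$ is a domain because it is ind-smooth over the domain $V/q$, whereas you spell out why reducedness plus the total ordering of ideals in a valuation ring forces $qV'$ to be prime).

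For (B) and (C), however, your strategy diverges substantially from the paper's and has a real gap. You try to \emph{globally decompose} $V\subset V'$ (or $V\subset V''$) into a tower of intermediate valuation rings of special shape (polynomial localization $W$, then an ind-\'etale $W'$, then an immediate extension) and invoke Proposition~\ref{p} or Theorem~\ref{T0} on each piece. The problematic step is the ``ind-\'etale enlargement $W\hookrightarrow W'\hookrightarrow V''$ reducing to an immediate extension'': even in characteristic~$0$, there is no general mechanism producing a valuation ring $W'\subset V''$ that is ind-\'etale over $W$ and has $W'\subset V''$ immediate. Hensel lifting requires Henselianity of $W$, which you do not have, and an algebraic (possibly infinite) residue extension does not in general lift to an \'etale sub-extension sitting inside $V''$. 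Likewise, your claim that conditions~(1) and~(2) ``descend to each intermediate valuation ring'' is asserted but not argued, and it is not clear how to control $\Spec W\to\Spec V$ well enough to transfer the height-$1$ hypothesis.

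The paper avoids all of this by \emph{not} building intermediate valuation rings. It stays at the level of the N\'eron--desingularization factorization problem: given a finitely presented $E$ and $w\colon E\to V'$, it iteratively enlarges $E$ so that the singular locus $V(w(H_{E/V}))\subset\Spec V'$ shrinks. Progress is measured by a finite stratification $\mathcal{F}_1,\dots,\mathcal{F}_s$ of $\Spec V'$ according to the transcendence degree of $\Frac(V'/q')$ over $\Frac(V/(q'\cap V))$. At each step one looks at the adjacent primes $q_1'\subset q'$ around the current $z$; if $q_1'$ is not the top of its stratum $\mathcal{F}_i$, Proposition~\ref{p} applied to the subquotient $(V/q_1'\cap V)_{\mathfrak{q}_i'\cap V}\subset (V'/q_1')_{\mathfrak{q}_i'}$ together with Lemma~\ref{k} and \cite[Lemma~13]{P} pushes $q_1'$ into a higher stratum; if $q_1'$ is the top of $\mathcal{F}_i$, one uses \cite[Corollary~19]{P} (case of trivial $\Gamma\subset\Gamma'$) or Zariski uniformization / hypothesis~(2) (case $K'/K$ of finite type) on the height-one step $q_1'\subset q'$ to move into $\mathcal{F}_{i+1}$. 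After at most $s$ iterations one reaches $t_{q_1'}=\trdeg k'/k$, where Proposition~\ref{p} applies to $V/(q_1'\cap V)\subset V'/q_1'$ directly. Thus Proposition~\ref{p} is invoked only on \emph{localized quotients} of $V\subset V'$ where the Abhyankar equality holds by construction, never on $V\subset V'$ itself; this sidesteps exactly the defect regime where your decomposition founders.
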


\begin{proof} If $V'$ is ind-smooth over $V$ then $V'/qV'$ is ind-smooth over $V/q$   for every $q\in V$ and so $V'/qV'$ is a domain. Also for $S=V\setminus q_2$ and we have $S^{-1}(V'/q_1V')$  ind-smooth over $S^{-1}(V/q_1)=(V/q_1)_{q_2}$ for  $q_1,q_2\in \Spec V$ with $q_1\subset q_2$. Hence  the necessity in the second statement  holds because $V'_{q_2V'}/q_1V'_{q_2V'}$ is a localization of  $S^{-1}(V'/q_1V')$.

  For the rest  we may also consider  the case when the field extension $K'/K$ is of finite type  in the first statement because we may write $K'$ as a filtered union of subfields $F$ of $K'$ which are finite type field extensions of $K$. The value group extension of $V\subset V'\cap F$ is still trivial and we may replace $V'$ by such $V'\cap F$.

Let   $E $ be  a $V$-algebra of finite presentation, let us say $E \cong
V[Y]/I$, $Y =
(Y_1\ldots, Y_m)$, $I$ being  a finitely generated ideal. Let $w:E\to V'$ be a $V$-morphism. We will show that $w$ factors through a smooth $V$-algebra and the proof ends applying \cite[Lemma 1.5]{S}   as in the proof of the above proposition. We consider $H_{E/V}$, $z$ as in Proposition \ref{p}  and we may assume $z'\in E$ with $z=w(z')$  standard over $V$, which is necessary later to apply Lemma \ref{k}. Let $q',q'_1$ be the adjacent prime ideals of $z$ as in the above proposition. We will show 
that $w$ factors through a finitely presented $V$-algebra $E'$, let us say by a morphism $w':E'\to V'$ such that $w'(H_{E//V})\not \subset q'$.

Let $q''\in \Spec V'$. Set $t=\trdeg K'/K$ and let $t_{q''}$ be the transcendental degree of the fraction field extension $K_{q''}\subset K'_{q''}$ of $V/q''\cap V\subset  V'/q''$. Then $t_{q''}\leq t$. Let $x_{q''}=(x_{q'',1},\ldots,x_{q'',t_{q''}})$ be  some elements from $V'$ which form a transcendental basis of the  field extension  $K_{q''}\subset K'_{q''}$. 

We  consider a finite partition ${\mathcal F}_i$, $i=1,\ldots,s$ of $\Spec V'$ corresponding to those $q''\in\Spec V'$ which have the same transcendental degree $t_i=t_{q''}\leq t$. 
 Set $ \qq'_i=\cup_{ q''\in {\mathcal F}_i}  q''$ and  let $q'\in  {\mathcal F}_i$. Clearly $\qq'_i$ is a prime ideal and if $\qq'_i\not \in {\mathcal F}_i$ then there exists a non zero  polynomial $f\in V[X]$, $X=(X_1,\ldots,X_{t_i}) $ such that $f(x_{q'})\in \qq'_i$. Thus $f(x_{q'})\in q''$ for some $q''\in  {\mathcal F}_i$, $q'\subset q''$, that is $t_{q''}<t_i$, which is false.  So $\qq'_i$  is the greatest prime  
ideal of $V'$ such that $t_{\qq'_i}=t_i$. Given $q_1'\in  
{\mathcal F}_i$ we will construct a $V$-algebra of finite presentation
 $E'$ and a morphism $w':E'\to V'$ such that $w$ factors through $w'$ and changing $E$ by $E'$ the new $q_1$ belongs to a  ${\mathcal F}_j$ for some $j>i$, so $t_j<t_i$. 
Finally we arrive in finite steps to the case $t_{q_1'}={\bar t}=[k':k]$, $k, k'$ being the residue fields of $V,V'$ respectively and it is enough to apply Proposition \ref{p} as we will see at the end of the proof.

 Indeed, assume $q_1'\in {\mathcal F}_i$. If $q_1'\not =\qq_i'$ then apply Proposition \ref{p} to $(V/q'_1\cap V)_{\qq'_i\cap V}\subset (V'/q'_1)_{\qq'_i}$. Then the composite map $E\to V'\to(V'/q')_{\qq'_i}$ induced by $w$ factors through a smooth 
$(V/q'_1\cap V)_{\qq'_i\cap V} $-algebra. By Lemma \ref{k} we see that the composite map $E\to V'\to V'_{\qq'_i}$ factors through a smooth $V$-algebra. Using  Lemma \ref{L1} we see that $w$ factors through a finitely presented $V$-algebra $E'$, let us say by a morphism $w':E'\to V'$ such that $w'(H_{E//V})\not \subset \qq'_i$. Then the new $z$ is not in $\qq'_i $. If the new $q'_1\in {\mathcal F}_i$ then the old $q'_1$ should be $\qq_i'$, which is not the case. 

If $q'_1=\qq'_i$ then we consider the extension   $(V/q'_1\cap V)_{q'\cap V}\subset (V'/q'_1)_{q'}$. In the first case we apply \cite[Corollary 19]{P} as in the above proposition.
 Assume now that (1), (2) hold. Set $q_1=q'_1\cap V$, $q=q'\cap V$. If $q=q_1$, then as in the above proposition we see that   the composite map $E\to V'\to (V'/q_1')_{q'}$ factors through a
 smooth $(V/q_1)_{q_1}$-algebra by Zariski's Uniformization Theorem (\cite{Z}, see also \cite[Theorem 35]{P}) and using Lemma \ref{k} we see that the composite map $E\to V'\to V'_{q'}$ factors through a
 smooth $V$-algebra. As above by  Lemma \ref{L1}  we see that $w$ factors through a finitely presented $V$-algebra $E'$, let us say through a map $w':E'\to V'$ with $w'(H_{E'/V})\not \subset q'$. Now the new  $q_1'$  that is the old $q'$, belongs to ${\mathcal P}_{i+1}$.

 Suppose $q\not=q_1$. As in Proposition \ref{p} we see that height$(q/q_1)=1$ and $q_1V'=q_1'$ since height$(q'/q_1')=1$.
 By (2)
 the  extension $V_q/q_1V_q\subset V'_{q'}/q_1'V'_{q'}$ of  valuation rings is ind-smooth and so the composite map $E\to V'\to (V'/q_1')_{q'}$ factors through a
 smooth $(V/q_1)_{q'}$-algebra. Using Lemma \ref{k}  and  Lemma \ref{L1}  we see as above that   $w$ factors through a finitely presented $V$-algebra $E'$, let us say through a map $w':E'\to V'$ with $w'(H_{E'/V})\not \subset q'$. Now the new $q_1'$,
 that is the old $q'$, belongs to ${\mathcal F}_{i+1}$.

Applying this construction we arrive  in at most $s$ steps  to the case when $t_{q_1'}={\bar t}$, when we apply Proposition \ref{p} to see that the extension  $V/q'_1\cap V\subset V'/q'_1$ is ind-smooth.  Using Lemma \ref{k} we see that $w$ factors through a smooth $V$-algebra
 Now apply \cite[Theorem 1.5]{S}. 
\hfill\ \end{proof}

\begin{Corollary}\label{c} Let $V \subset V'$ be an extension of valuation rings containing $\bf Q$, with
$\mm$, $\mm' = \mm V'$ their maximal ideals and $\dim V = 1$. Let $q\in \Spec V'$ be such that $q\cap V=0$ and
height$(\mm'/q) = 1$. Assume that the extension $V \subset V'/q$ has the same value group.
Then $V'$ is ind-smooth over $V$.
\end{Corollary}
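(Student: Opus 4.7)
The plan is to adapt the proof technique of Theorem \ref{t} to this setting, with the given prime $q$ playing the role of the top of a stratum. By \cite[Lemma 1.5]{S} it suffices to check that every morphism $w\colon E\to V'$ from a finitely presented $V$-algebra $E$ factors through a smooth $V$-algebra. Following the setup of the preceding proofs, I would assume $w$ is injective and pick $z'\in E$ standard over $V$ with $z=w(z')\in H_{E/V}V'$ nonzero, and let $q_1'\subsetneq q'$ denote the adjacent primes of $V'$ associated to $z$. Because $\height(\mm'/q)=1$, the spectrum $\Spec V'$ is exhausted by the two strata $\mathcal F_1=\{p\in\Spec V':p\subseteq q\}$ (whose maximum is $q$) and $\mathcal F_2=\{\mm'\}$, so the iterative refinement will terminate after at most two steps.

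The case analysis on the position of $q_1'$ then proceeds as follows. If $q_1'\subsetneq q$, the condition $q\cap V=0$ forces the map $V\to V'_q$ to invert every nonzero element of $V$ and hence to factor through $K$; Zariski's uniformization theorem \cite{Z} (see also \cite[Theorem 35]{P}) applied to the valuation ring $(V'/q_1')_q$ of $\kappa(q_1')$ containing $K$ then shows that $(V'/q_1')_q$ is ind-smooth over $K$, and hence over $V$. So $E\to(V'/q_1')_q$ factors through a smooth $V$-algebra, Lemma \ref{k} lifts this to a factorization of $E\to V'_q$ through a smooth $V$-algebra, and by \cite[Lemma 13]{P} we replace $E$ by a finitely presented refinement $E'$ with $w'(H_{E'/V})V'\not\subset q$, pushing the new $q_1'$ into $\{q,\mm'\}$. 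If $q_1'=q$, then $q'=\mm'$ by $\height(\mm'/q)=1$; the hypothesis that $V\subset V'/q$ has trivial value group extension, together with Theorem \ref{t}(i), gives that $V'/q$ is ind-smooth over $V$, so $E\to V'/q$ factors through a smooth $V$-algebra, and a further application of Lemma \ref{k} lifts this to the desired factorization of $w$ through a smooth $V$-algebra. Finally, if $q_1'=\mm'$ then $z$ is a unit, $H_{E/V}V'=V'$, and \cite[Lemma 13]{P} gives the factorization directly.

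The principal difficulty I anticipate is the bookkeeping required to apply Lemma \ref{k} correctly: one has to match the standard element $z'\in E$ with the nonzerodivisor $a$ of that lemma and re-interpret the factorizations through $(V'/q_1')_q$ and $V'/q$ as mod-$a^3$ factorizations in the sense required by Lemma \ref{k}. This is exactly the technical core that underlies the proofs of Proposition \ref{p} and Theorem \ref{t}, and I expect it to go through without essential change once the strata and their top primes have been identified. Since the iteration then terminates in at most two steps, \cite[Lemma 1.5]{S} yields that $V'$ is ind-smooth over $V$.
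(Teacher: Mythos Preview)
Your argument is correct and is essentially an unpacking of the paper's one-sentence proof. The paper simply reduces to the case where $K'/K$ is of finite type (as in the proof of Theorem~\ref{t}) and then invokes Theorem~\ref{t}; you instead run the iteration of that proof directly, with the two-element stratification $\{p\le q\}\cup\{\mm'\}$ tailored to the hypotheses. This is a mild but genuine simplification: your stratification makes termination immediate and the finite-type reduction unnecessary, whereas a literal appeal to the \emph{statement} of Theorem~\ref{t} would require verifying its condition~(2), which for $\dim V=1$ is precisely the assertion that $V\subset V'$ is ind-smooth --- so the paper's ``apply Theorem~\ref{t}'' must in any case be read as ``use the proof technique'', which is exactly what you do. Your case analysis is the right one: for $q_1'\subsetneq q$ the condition $q\cap V=0$ forces $V\to V'_q$ to factor through $K$, so Zariski's theorem handles $(V'/q_1')_q$; for $q_1'=q$ the trivial value-group hypothesis lets the first part of Theorem~\ref{t} handle $V\subset V'/q$; and $q_1'=\mm'$ is trivial. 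The Lemma~\ref{k} bookkeeping you flag is identical to that in Proposition~\ref{p} and Theorem~\ref{t} (one uses that $q_1'\subset z^3V'$ to pass from a factorization modulo $q_1'$ to one modulo $z^3$) and carries over unchanged.
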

For the proof apply Theorem \ref{t} to see that $V'/q$ is ind-smooth over $V$ which is enough using  Zariski's Uniformization Theorem and Lemmas \ref{k} and \ref{L1}.

\vskip 0.5 cm

\section{Extensions of valuation rings containing $\bf Q$.}

We start reminding a property of ind-smooth morphisms given in \cite[Lemma 28]{P}.

\begin{Lemma}\label{0} Let $V \subset V'$ be an extension of valuation rings which is ind-smooth. Then $\Omega_{V'/V}$, that
is $ H_0(V, V',V')$ in terms of Andre-Quillen homology, is a flat $V'$-module and $H_1(V, V',V')
= 0$.
\end{Lemma}

The following two lemmas  extend closely \cite[Lemmas 29, 30]{P}.
\begin{Lemma}\label{1}

Let $V\subset V'$ be an extension of valuation rings  with $\dim V=1$,  $\Gamma$  the value group of $V$, $\mm$ the maximal ideal of $V$ and $K$ the fraction field of $V$. Let $q'\in \Spec V'$ be the prime ideal corresponding to the maximal ideal of the valuation ring $K\otimes_V V'$ and $\Gamma''$ the value group of the valuation ring $V''=V'/q'$.   Assume that $\mm V'$ is the maximal ideal of $V'$. Then $\dim V''=1$ and if the valuation group extension  $\Gamma\subset \Gamma''$ of $V\subset V''$  has a non trivial  torsion
then the extension $V\subset V'$ is not ind-smooth.
\end{Lemma}

\begin{proof} Let $\gamma\in \Gamma''\setminus \Gamma$ be such that $n\gamma\in \Gamma$ for some positive integer $n$. Choose an element $x\in V'$ such that $\val(x)=\gamma$ in $\Gamma''$. Then $x^n\equiv zt''$ modulo $q'$ for some $z\in V$ and an unit $t''\in V'$. Thus $x^n= zt$ modulo $q''$ for some  unit $t\in V'$  and the system $S$ of polynomials $X^n=zT$, $TT'=1$ over $V$ has a solution in $V'$. If $V'$ is ind-smooth over $V$ then $S$ has a solution in a smooth $V$-algebra and so one $({\tilde x},{\tilde t},{\tilde t}')$ in the completion of $V$, which is Henselian. But then $\gamma =\val(z)/n=\val({\tilde x})$ must be in $\Gamma$ which is false.
\hfill\ \end{proof}

\begin{Lemma}\label{2}

Let $V\subset V'$ be an extension of valuation rings  with $\dim V=1$ and containing $\bf Q$. Let $\mm$ be the maximal ideal of $V$, $K$ the fraction field of $V$,  $q'\in \Spec V'$  the prime ideal corresponding to the maximal ideal of the valuation ring $K\otimes_V V'$ and $\Gamma''$ the value group of the valuation ring $V''=V'/q'$. Assume that  $\mm V'$ is the maximal ideal of $V'$ (so $\dim V''=1$) and the value group  $\Gamma\subset {\bf R}$ of $V$ is   dense in $\bf R$. Also assume that the value group $\Gamma''\subset {\bf R}$ of $V''$ contains  an element $\gamma\not \in \Gamma$ inducing in $\Gamma''/\Gamma$ an element without torsion.
Then the extension $V\subset V'$ is not ind-smooth.
\end{Lemma}

\begin{proof}
Let $x\in V'$ be such that $\val(x)=\gamma$ and
   $V'_0=V'\cap K(x)$. 

We will show that $\Omega_{V'_0/V}$ has torsion.  The  proof  idea is from \cite[Lemma 7.2]{Po0} (see also \cite[Lemma 30]{P}).  We consider as in the  quoted lemmas two real sequences $(u_i)$, $(v_i)$ from $\Gamma$  which converge in $\bf R$ to $\gamma_1$ and such that
 $u_{j+1}>u_j, v_j>v_{j+1}$,  $u_j<\gamma<v_j$  for all $j$. Let   $a_j$, $b_j$ be in $V$ with values $u_j$, resp. $v_j$.
Note that $V'_0$ is a filtered  union  of localizations $C_j$ of $V[Z_j,Z'_j]/(Z_jZ'_j-(b_j/a_j))\cong V[z_j,z'_j]$, where   $z_j=x/a_j$ and $z'_j=b_j/x$ in $V'$, the map $C_j\to C_{j+1}$ being given by $Z_j\to (a_{j+1}/a_j)Z_{j+1}$, $Z'_j\to (b_j/b_{j+1})Z'_{j+1}$.

Let  $f_j:C_{j+1}\otimes_{C_j} \Omega_{C_j/V}\to  \Omega_{C_{j+1}/V'}$ be the map given by $d z_j\to (a_{j+1}/a_j)d z_{j+1}$, $d z'_j\to (b_j/b_{j+1})dz'_{j+1}$. Then $K\otimes_V f_j$ is injective. Indeed, an element from \\
$\Ker (K\otimes_V f_j)$ induced by $w=\alpha\otimes dz_j+\beta \otimes dz'_j$, $\alpha, \beta\in C_{j+1}$ must go by $f_j$ in
 $$\alpha (a_{j+1}/a_j)dz_{j+1}+\beta (b_j/b_{j+1})dz'_{j+1}\in <z'_{j+1}dz_{j+1}+z_{j+1}dz'_{j+1}>$$
 
\noindent in $C_{j+1}dz_{j+1}\oplus C_{j+1}dz'_{j+1}$. So $\alpha (a_{j+1}/a_j)=\mu z'_{j+1}$ and $\beta (b_{j}/b_{j+1})=\mu z_{j+1}$ for some $\mu \in C_{j+1}$. It follows that 
$\alpha=\mu(b_{j+1}/b_j)(a_j/a_{j+1})z'_j$ and $\beta=\mu(b_{j+1}/b_j)(a_j/a_{j+1})z_j.$
Note that  $\eta_{j+1}=((b_{j+1}/b_j)(a_j/a_{j+1}))^{-1}\in V$ and so 
$$\eta_{j+1}w=\mu( z'_jdz_j+ z_jdz'_j)\in <z'_jdz_j+z_jdz'_j>,$$
 which shows our claim.

We may assume that $\val( z_j) \leq \val( z'_j)$ and so $z'_j=t_jz_j$  for some $ t_j\in V_0'$. Thus there exists $j' \geq j$ such that $t_j\in C_{j'}$. We  have $dz_j, dz'_j$ in $\Omega_{C_{j'}/V}$ and
$z_jw'_j=0$, for $w'_j=dz'_j+t_jdz_j$. So $ C_{j'}\otimes_{C_j} \Omega_{C_j/V}$ is not torsion free. Moreover $w'_j$ and $z_j$ are not killed by multiplication with non zero elements of $V$. Thus $K\otimes_V(C_{j'}\otimes_{C_j} \Omega_{C_j/V})$ is not torsion free. Since $K\otimes_V f_j$ is injective we see that $K\otimes_V \Omega_{V'_0/V}$ - the limit of $K\otimes_V \Omega_{C_j/V}$ is not torsion free. In particular, $\Omega_{V'_0/V}$ is not torsion free.

 Now, in the Jacobi-Zariski sequence (see \cite[Theorem 3.3]{S}  applied to $V\to V'_0\to V'$
  $$H_1(V,V',V')\to V'\otimes_V \Omega_{V'_0/V}\to \Omega_{V'/V}\to  \Omega_{V'/V'_0}\to 0$$

\noindent we have $H_1(V,V',V')=0$ and $ \Omega_{V'/V}$ flat because $V'/V$ is ind-smooth (see Lemma \ref{0}) with the help of \cite[Theorem 3.4]{S}).
It follows  that  $V'\otimes_{V'_0} \Omega_{V'_0/V}$ has no torsion and so $ \Omega_{V'_0/V}$ has also no torsion because $V'_0\subset V'$ is flat. This is not possible because as above it has torsion.  Thus $V'$ is not ind-smooth over $V$.
\hfill\ \end{proof}

\begin{Proposition}\label{p1}
Let $V\subset V'$ be an extension of valuation rings containing $\bf Q$ with $\dim V=1$,   $\mm$ the maximal ideal of $V$ and $K$ the fraction field of $V$. Let $q'\in \Spec V'$ be the prime ideal corresponding to the maximal ideal of the valuation ring $K\otimes_V V'$.   Assume that $\mm V'$ is the maximal ideal of $V'$. Then the extension $V\subset V'$ is ind-smooth if and only if 

$(*)$ \ \ For all $x'\in V'\setminus q'$  there exists $x\in V$ with $\val(x')=\val(x)$. 
\end{Proposition}
\begin{proof} Assume that $(*)$ does not hold and let $x'\in \mm V'\setminus q'$ such that $\val(x')\not \in \Gamma$, where $\Gamma$ is the value group of $V$. If $V$ is a DVR then $V''$ is a DVR too and $(*)$ holds because $\mm V''$ is the maximal ideal of $V''$. So $V$ is not a DVR and we may assume $\Gamma\subset {\bf R}$ is dense  in $\bf R$.
 Apply Lemma \ref{2} (we preserve the denotations from there) and  we see that $V\subset V'$ is not ind-smooth when $\val(x')$ induces an element  without torsion in $\Gamma''/\Gamma$. 
If $\val(x')$ induces an element  with torsion in $\Gamma''/\Gamma$ then apply Lemma \ref{1}. So the necessity holds.

Now suppose that $(*)$ holds. Then $\Gamma=\Gamma''$. By \cite[Proposition 20]{P} we see that $V''$ is ind-smooth over $V$. Let $B$ be a $V$-algebra of finite presentation and $v:B\to V'$ a $V$-morphism. As in the proof of \cite[Proposition 20]{P} we see that the smooth locus $H_{B/V}$ satisfies $v(H_{B/V})\not =0$ by separability. Moreover, using the Zariski Uniformization Theorem \cite{Z} applied for $K\subset K\otimes_V V'$  and \cite[Lemma 13]{P} we see that $v$ factors through a $V$-algebra $E\subset V'$ of finite presentation  such that  $H_{E/V}V'\not \subset q'$. As   $V''$ is ind-smooth over $V$ it follows by Lemma \ref{k}  that the inclusion $E\subset V'$ factors through a smooth $V$-algebra. Using \cite[Lemma 1.5]{S} we see that $V'$ is ind-smmoth over $ V$.
\hfill\ \end{proof}

\begin{Theorem}\label{t1}
Let $V\subset V'$ be an extension of valuation rings containing $\bf Q$ with $\dim V=1$,   $\mm$ the maximal ideal of $V$ and $K$ the fraction field of $V$. Let $q'\in \Spec V'$ be the prime ideal corresponding to the maximal ideal of the valuation ring $K\otimes_V V'$.   Assume that $\mm V'$ is a prime ideal of $V'$. Then the extension $V\subset V'$ is ind-smooth if and only if 

$(**)$ \ \ For all $x'\in \mm V'\setminus q'$  there exists $x\in V$ with $\val(x')=\val(x)$. 
\end{Theorem}

\begin{proof} Assume that $(**)$ holds. By the above proposition $V'_{\mm V'}$ is ind-smooth over $V$. Let $B$ be a $V$-algebra of finite presentation and $v:B\to V'$ a $V$-morphism. Then $v$ factors through  a $V$-algebra $E$ of finite presentation, let us say by a $V$-morphism $w:E\to V'$, such that $w(H_{E/V})\not \subset \mm V'$. By the   Zariski Uniformization Theorem \cite{Z} applied for $V/\mm\subset  V'/\mm V'$ we see that $V'/\mm V'$ is ind-smooth over $V/\mm$. Using  Lemma \ref{k} we conclude  that $w$ factors through a smooth $V$-algebra and we may apply \cite[Lemma 1.5]{S}.

On the other hand, if $V'$ is ind-smooth over $V$ then $V'_{\mm V'}$ is ind-smooth over $V$ and we apply the above proposition. 
\hfill\ \end{proof}

\begin{Corollary} \label{c1} Let $V\subset V'$ be an extension of valuation rings containing $\bf Q$ such that for each $q\in \Spec V$ the ideal $qV'$ is prime. The following statements are equivalent:
\begin{enumerate}
\item $V'$ is ind-smooth over $V$,

\item for any prime ideals $q_1,q_2\in \Spec V$ such that $q_1\subset q_2$ and height$(q_2/q_1)=1$ the extension $(V/q_1)_{q_2}\to (V'/q_1 V')_{q_2 V'}$ is ind-smooth,

\item  for any prime ideals $q_1,q_2\in \Spec V$ such that $q_1\subset q_2$ and height$(q_2/q_1)=1$  and any $x'\in q_2V'\setminus q_1'$ there exists $x\in V$ such that $\val(x')=\val(x)$, where $q_1'\in \Spec V'$ is the prime ideal corresponding to the maximal ideal of $V_{q_1}\otimes_V V'$, that is the maximal prime ideal of $V'$ lying on $q_1$.
\end{enumerate}
\end{Corollary}
\begin{proof} By the above theorem we have (2) and (3) equivalent. Note that (1) implies (2). Assume that (3) holds. Then for any field extensions $K\subset L\subset K'$ we see that  the condition (3)(and so (2)) holds for the extension $V\subset V_L=V'\cap L$. In particular for the case when $L/K$ is of finite type. Using Theorem \ref{t}, we see that $V_L$ is smooth over $V$ when $L/K$ is of finite type. But then $V'$ is ind-smooth over $V$ because $V'$ is the filtered union of such $V_L$, when $L/K$ is of finite type.
\hfill\ \end{proof}

Note that this corollary extends Theorem \ref{T0} because an immediate extension of valuation rings containing $\bf Q$ satisfies (3).

\begin{Proposition} \label{P} Let $V\subset V'$ be an extension of valuation rings, which is ind-smooth. Assume that $V$ is Henselian. Then any finite
system of polynomials over $V$, which has a solution in $V'$, has also one in $V$. In particular, a Henselian valuation ring containing $\bf Q$  has the property of  Artin approximation (this is extended in \cite[Corollary 1.2.1]{MB})  
\end{Proposition}
\begin{proof} Let $f$  be a finite
system of polynomials over $V$ and $y'\in V'$ a solution of $f$. As $V'$ is a filtered limit of smooth $V$-algebras there exists a solution $y''$ of $f$ in a smooth 
$V$-algebra $C$. Clearly, there exists a retraction $h:C\to V$ of $V\subset C$ by the Implicit Function Theorem because $V$ is Henselian and $h(y'')$ is a solution of $f$ in $V$.

Let $\hat V$ be the completion of $V$. For the second statement note that  $V\subset {\hat V}$ is immediate ans so ind-smooth by Theorem \ref{T0}.
\hfill\ \end{proof}

\vskip 0.5 cm

\end{document}